\documentclass[english]{article}
\usepackage[T1]{fontenc}
\usepackage[latin9]{inputenc}
\usepackage[letterpaper]{geometry}
\geometry{verbose,tmargin=3cm,bmargin=3cm,lmargin=3cm,rmargin=3cm}
\usepackage[active]{srcltx}
\usepackage{amsthm}
\usepackage{amsmath}
\usepackage{amssymb}
\PassOptionsToPackage{normalem}{ulem}
\usepackage{ulem}
\usepackage[all]{xy}
\usepackage{pb-diagram}

\def\zt{\mathbb{Z}}
\def\R{\mathbb{R}}

\def\F2{\mathbb{F}_2}
\def\zt2{\mathbb{Z}_2}
\def\sq1{\mathrm{Sq}^1}

\makeatletter
\numberwithin{equation}{section}
\numberwithin{figure}{section}
\theoremstyle{plain}
\newtheorem{thm}{\protect\theoremname}[section]
  \theoremstyle{plain}
  \newtheorem{prop}[thm]{\protect\propositionname}
  \theoremstyle{plain}
  \newtheorem{cor}[thm]{\protect\corollaryname}
  \newtheorem{lema}[thm]{Lemma}
   
\@ifundefined{date}{}{\date{}}
\usepackage{tikz-cd}

\makeatother

\usepackage{babel}
  \providecommand{\corollaryname}{Corollary}
  \providecommand{\propositionname}{Proposition}
\providecommand{\theoremname}{Theorem}

\begin{document}
 \title{A Relation Between  Existence of Real Symmetric Nonsingular Bilinear Maps and the  Antisymmetric Index of Projective Spaces.}
 \author{Carlos Dom\'{i}nguez-Albino}
 
 \maketitle

 ~~~~~~~~~~~~~~~~~~~~~~~~~~~~~~~~~~~~~~~~~~~~~~~~~~~~~~~

 \section{Introduction}\label{Sect:int}

 Let $X$ be a topological space, we denote by $F(X,2)$ to $X\times X-\Delta,$ where $\Delta$ is the diagonal in $X\times X,$ of course this is the space of pairs of different points in $X,$ also known as the \textit{configuration space} of two points in $X.$ If now we consider the group of two elements $\mathbb{Z}_2=\{1,-1\}$ there is an action of this group on $F(X,2)$ given by $(-1)(x,y)=(y,x),$ we make reference to this action by the name \textit{symmetric action} of $\mathbb{Z}_2$ on $F(X,2).$ The orbit space coming from the symmetric action is denoted as $B(X,2)$ and is the space of pairs of unordered points in $X$ commonly named the \textit{unordered configuration space} of two points in $X.$

 Let $S^{n-1}$ be the $n-1$ dimensional sphere contained in the Euclidean space $\R^n,$ the antipodal action of $\zt2$ helps to state the classical question, what is the $ \mbox{ minimum }  \{n\in \mathbb{N}\}$ such that there exits an equivariant map, we refer to this kind of map as \textit{antisymmetric,} $$f:F(X,2)\to S^{n-1}$$ with respect to the symmetric and antipodal actions, we name this number \textit{antisymmetric index} of $X$ and it is  denoted by $I_{as}(X).$

 In the direction we are going, this question is important due to its connection with the embedding problem of manifolds stated by A. Haefliger as follows. Suppose $M$ is a $k-$dimensional smooth manifold embedded in $\R^n$ by $g,$ then this would define an antisymmetric map $f:F(M,2)\to S^{n-1}$ by the formula.
 
 $$f(x,y)=\frac{g(x)-g(y)}{||g(x)-g(y)||}$$  
 
 From  the work of Haefliger in \cite{haefliger} S. Feder in \cite{feder} established a partial converse of the previous construction.

 \begin{thm}
 	Suppose $M$ is an $m-$dimensional smooth compact manifold. Then, $M$ embeds in $\R^n$ if there exists an antisymmetric map $F(M,2)\to S^{n-1}$ and $n\geq \frac{3}{2}(m+1).$ 
 \end{thm}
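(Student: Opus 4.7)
The plan is to invoke Haefliger's metastable embedding theorem from \cite{haefliger}, of which Feder's statement is essentially a repackaging. Haefliger showed that in the metastable range $2n \geq 3(m+1)$, the ``secondary map'' construction
\[
g \longmapsto \tilde g, \qquad \tilde g(x,y) = \frac{g(x)-g(y)}{\|g(x)-g(y)\|},
\]
induces a bijection between isotopy classes of smooth embeddings $M \hookrightarrow \R^n$ and $\zt2$-equivariant homotopy classes of maps $F(M,2) \to S^{n-1}$. In particular, the existence of a single antisymmetric map forces the target set of embeddings to be non-empty, which is precisely the content of the theorem.

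Concretely, I would organize the argument as follows. First, fix any smooth map $g_0: M \to \R^n$ in general position; such a map exists since $n \geq m+1$, e.g.\ take a Whitney embedding into $\R^{2m+1}$ followed by a generic linear projection. On the complement of the (at most finite-dimensional) double-point locus of $g_0$, the formula above defines a partial $\zt2$-equivariant map $\tilde g_0$ into $S^{n-1}$. Converting $g_0$ into an embedding whose secondary map is equivariantly homotopic to the given $f$ is an obstruction problem on $F(M,2)$: the successive obstructions live in equivariant cohomology groups with coefficients derived from $\pi_\ast(S^{n-1})$, and the primary one is trivialized exactly by supplying the equivariant extension $f$.

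The main obstacle, and really the whole technical substance, is Haefliger's argument that in the metastable range this primary obstruction is the \emph{only} obstruction. The hypothesis $n \geq \tfrac{3}{2}(m+1)$ is what guarantees that two sheets of $g_0$ meeting along a double-point component have total codimension strictly greater than $m$, so that a Whitney-trick style cancellation of such components can be performed inductively without creating fresh intersections, with each cancellation guided by an equivariant homotopy between $\tilde g_0$ and $f$ on a neighborhood. Once the inductive resolution of double points terminates, one obtains a smooth embedding $g: M \hookrightarrow \R^n$, completing the proof. No new construction beyond Haefliger's is required; the work is in verifying that the dimensional bound is exactly the one needed to apply that theorem.
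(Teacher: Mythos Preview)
The paper does not actually prove this theorem. It is stated in the introduction as a result due to Feder \cite{feder}, building on Haefliger \cite{haefliger}, and is simply cited without proof; the paper's own contributions begin later with the cohomological computations for $B(\R P^m,2)$. So there is no ``paper's own proof'' to compare against: the paper treats this as background from the literature.

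Your proposal is therefore not in conflict with anything in the paper, and in fact goes further than the paper does by sketching why the result holds. Your identification of the statement as a direct consequence of Haefliger's metastable embedding theorem is exactly right, and the outline you give (secondary map construction, obstruction-theoretic reduction, Whitney-trick cancellation of double points enabled by the codimension hypothesis $2n\geq 3(m+1)$) is a fair summary of the mechanism behind Haefliger's result. If anything, one could note that the detailed obstruction-theoretic and surgery arguments are genuinely nontrivial and are the content of \cite{haefliger} rather than something one reproves here; but as a pointer to the literature together with an indication of why the dimensional bound enters, your write-up is appropriate and more informative than the paper's bare citation.
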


 When $M$ is the real $m-$dimensional projective space $\R P^m$ there are other relations concerning this kind of question coming from a problem in algebra and is about systems of real symmetric bilinear forms.

 Consider a homogeneous system of real symmetric bilinear equations
 
 \begin{eqnarray}
 f_{1}(x,y)=a^1_{11}x_{1}y_{1}+\cdots+a^1_{rr}x_{r}y_{r} & = & 0\nonumber \\
 \vdots & \vdots & \vdots\nonumber \\
 f_{n}(x_{,}y)=a^n_{11}x_{1}y_{1}+\cdots+a^n_{rr}x_{r}y_{r} & = & 0\label{symbilhomogen}
 \end{eqnarray}
 
 It means that every $f_i:\R^r\times \R ^r\to \R, i=1\dots n,$ is  real symmetric ($f_i(x,y)=f_i(y,x)$ for all $x,y\in \R^r$) bilinear form. A trivial solution to this system is one of the form $(0,y)\in \R^r\times \R^r.$ The problem is about whether there  exists a non-trivial solution to (\ref{symbilhomogen}) this problem was established by Stiefel but Hopf in \cite{hopf} gave a topological meaning. In general, the system (\ref{symbilhomogen}) defines a map $$\mu:\R^r\times \R^r\to \R^n$$ which is bilinear and symmetric, when the system only has non-trivial solutions we say that $\mu$ is non-singular.
 
 Hopf's idea is as follows, suppose $\mu$ is bilinear, symmetric, and non-singular then it  defines a map $f:\R P^{r-1}\to S^{n-1}$ by $$f([x])=\frac{\mu(x,x)}{||\mu(x,x)||}.$$
 
 It is not difficult to prove that this map is well defined, continuous, and injective, a topological embedding; and for $r>2$ would give an embedding of $\R P^{r-1}$  in $\R^{n-1}$ . Then the  question, given a projective space $\R P^m$ does there exist an Euclidean model for it? arose in this context. Following this line, real and complex polynomial product provides symmetric non-singular bilinear maps for  odd and even dimensions cases,  giving very well known embedding results for real projective spaces.           
 
 \begin{thm}\label{polemb}
 	Let $r$ be higher than $2.$ If $r$ is odd, $\R P^{r-1}$ is embedded in $\R^{2r-2}.$ For even $r,$ it is embedded in $\R^{2r-3}.$  
  \end{thm}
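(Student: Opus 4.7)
The plan is to invoke Hopf's correspondence recalled in the introduction: any real symmetric nonsingular bilinear map $\mu:\R^r\times\R^r\to\R^n$ produces a topological embedding $f:\R P^{r-1}\to S^{n-1}\subset\R^n$ via $f([x])=\mu(x,x)/\|\mu(x,x)\|$. It therefore suffices, for each parity of $r$, to exhibit such a $\mu$ with $n=2r-1$ (odd case) or $n=2r-2$ (even case), and then argue that the image of $f$ omits some point of $S^{n-1}$, so that stereographic projection from that point yields an embedding into $\R^{n-1}$.

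For odd $r$ I would use real polynomial multiplication. Identify $\R^r$ with the space $\mathcal{P}_r$ of real polynomials of degree strictly less than $r$ via $(a_0,\dots,a_{r-1})\leftrightarrow a_0+a_1t+\cdots+a_{r-1}t^{r-1}$. Ordinary multiplication
\[
\mu_{\R}:\mathcal{P}_r\times\mathcal{P}_r\longrightarrow\mathcal{P}_{2r-1}\cong\R^{2r-1}
\]
is visibly $\R$-bilinear and symmetric, and it is nonsingular because $\R[t]$ is an integral domain (a product of nonzero polynomials is nonzero). Hopf's construction produces an embedding $\R P^{r-1}\hookrightarrow S^{2r-2}$, and since $r>2$ implies $\dim\R P^{r-1}=r-1<2r-2=\dim S^{2r-2}$, the compact image is a proper subset of the sphere; choosing any point in its complement and projecting stereographically gives the claimed embedding into $\R^{2r-2}$.

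For even $r=2s$ I would replace $\R[t]$ by $\mathbb{C}[z]$. Identify $\R^r=\R^{2s}\cong\mathbb{C}^s$ with complex polynomials of degree less than $s$, and let
\[
\mu_{\mathbb{C}}:\mathbb{C}^s\times\mathbb{C}^s\longrightarrow\mathbb{C}^{2s-1}
\]
be complex multiplication. Forgetting the complex structure, this is an $\R$-bilinear symmetric map $\R^r\times\R^r\to\R^{4s-2}=\R^{2r-2}$, again nonsingular because $\mathbb{C}[z]$ is a domain. Hopf's construction now yields $\R P^{r-1}\hookrightarrow S^{2r-3}$, and the inequality $r-1<2r-3$, which holds precisely because $r>2$, allows us to omit a point and descend to $\R^{2r-3}$ as before.

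There is no substantive obstacle in this argument; the only point requiring a moment's thought is the descent from the sphere to Euclidean space, which is exactly where the hypothesis $r>2$ is used (critically so in the even case, where $r=2$ would force $r-1=1=2r-3$ and the dimension comparison fails). The verification that polynomial multiplication is symmetric, bilinear, and nonsingular is completely routine, and the topological-embedding property of $f$ has already been recorded after the definition of Hopf's map.
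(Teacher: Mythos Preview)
Your proposal is correct and is precisely the argument the paper has in mind: the paper does not give a detailed proof of this classical result, merely remarking that ``real and complex polynomial product provides symmetric non-singular bilinear maps for odd and even dimensions cases,'' and that Hopf's map then yields, for $r>2$, an embedding of $\R P^{r-1}$ in $\R^{n-1}$. You have simply unpacked both of these remarks, supplying the polynomial-ring constructions explicitly and making the descent from $S^{n-1}$ to $\R^{n-1}$ precise via the dimension count and stereographic projection.
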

 
 It is remarkable that Hopf also proved.
 
 \begin{thm}\cite{hopf}\label{nonembHopf}
 	If $r>2$ it is not possible to embed $\R P^{r-1}$ in $\R^r.$
 \end{thm}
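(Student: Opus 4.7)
I would argue by contradiction: suppose $j\colon\R P^{r-1}\hookrightarrow\R^r$ is a smooth embedding and extract a topological obstruction. The image is a compact hypersurface of $\R^r$, hence separating by Jordan--Brouwer and in particular two-sided. Two-sidedness inside the orientable ambient manifold $\R^r$ forces $\R P^{r-1}$ itself to be orientable, so all cases with $r$ odd (and $r\geq 3$) are ruled out at once, since $\R P^{r-1}$ is non-orientable whenever $r-1$ is even.

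For even $r$, two-sidedness now upgrades to triviality of the normal line bundle, whence $T(\R P^{r-1})\oplus\epsilon^1\cong\epsilon^r$. Combining with the classical splitting $T(\R P^{r-1})\oplus\epsilon^1\cong r\gamma_1$ (where $\gamma_1$ is the tautological line bundle) yields $r\gamma_1\cong\epsilon^r$ stably. Taking total Stiefel--Whitney classes in $\F2[\alpha]/(\alpha^r)$ turns this into $(1+\alpha)^r=1$, and by Lucas's theorem this forces $r$ to be a power of $2$. The value $r=2$ is not excluded by the hypothesis $r>2$ and is indeed realized by $\R P^1\cong S^1\hookrightarrow\R^2$.

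It remains to rule out $r=2^s$ for $s\geq 2$. For $s\geq 4$ I would invoke Adams's computation of $\widetilde{KO}(\R P^{r-1})$ as cyclic of order $2^{\phi(r-1)}$ generated by $\gamma_1-1$; stable triviality of $r\gamma_1$ translates into the numerical constraint $2^{\phi(r-1)}\mid r$, which fails. The genuine obstacle is the two small cases $r=4$ and $r=8$, where $\R P^{r-1}$ is in fact stably parallelizable (via $\R P^3\cong SO(3)$ and the parallelizability of $\R P^7$), so Stiefel--Whitney and $KO$-theoretic obstructions are both silent. To handle these I would work with the bounded region $W\subset\R^r$ with $\partial W=\R P^{r-1}$ produced by Jordan--Brouwer, extracting a contradiction from Lefschetz-duality constraints on the intersection form of $W$ or from characteristic numbers; alternatively, and more in the spirit of the present paper, one can translate the embedding into an antisymmetric map $F(\R P^{r-1},2)\to S^{r-1}$ by Haefliger's formula and compare against a sharp lower bound for $I_{as}(\R P^{r-1})$.
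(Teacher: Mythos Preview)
The paper does not prove this statement itself; Theorem~\ref{nonembHopf} is simply cited from Hopf, and the text only records that Hopf's argument is via Alexander duality (with Steenrod's cohomology-operations proof mentioned as an alternative). So the benchmark is Hopf's classical argument, and your route is genuinely different---and considerably more elaborate. The orientability step for $r$ odd, the Stiefel--Whitney step for $r$ even but not a power of $2$, and the $KO$-theoretic step for $r=2^s$ with $s\geq 4$ are all correct as stated.

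The genuine gap is precisely where you flag it: $r=4$ and $r=8$. Your suggestion ``characteristic numbers'' cannot work here, since $\R P^3$ and $\R P^7$ are null-cobordant ($\Omega_3^{SO}=\Omega_7^{SO}=0$), so no bordism invariant obstructs them from bounding abstractly; the obstruction is to bounding \emph{inside} $\R^r$. Your alternative via the antisymmetric index is, strictly speaking, not circular---Lemma~\ref{alturadeb2} and the argument of Theorem~\ref{T:asindexPibound} do not rely on Theorem~\ref{nonembHopf}, and they do yield $I_{as}(\R P^3)\geq 5$ and $I_{as}(\R P^7)\geq 9$---but it is anachronistic: you would be invoking the paper's main machinery to recover a 1940 input to that very machinery.

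The irony is that your remaining suggestion, ``Lefschetz-duality constraints'' on the bounded region, is essentially Hopf's own proof, and it handles \emph{all} $r>2$ at once without the case split. With $\Z_2$ coefficients: if $\R P^{r-1}\hookrightarrow S^r$ separates $S^r$ into closed pieces $\bar A,\bar B$, Mayer--Vietoris gives $H^k(\bar A)\oplus H^k(\bar B)\cong H^k(\R P^{r-1})$ for $0<k<r-1$ and forces $H^{r-1}(\bar A)=H^{r-1}(\bar B)=0$ (since $H^r(\bar A)=H^r(\bar B)=0$ and the top connecting map is onto). The generator $\alpha\in H^1(\R P^{r-1};\Z_2)$ then lifts to one side, say $\tilde\alpha\in H^1(\bar A)$, so $\alpha^{r-1}$ is the restriction of $\tilde\alpha^{r-1}\in H^{r-1}(\bar A)=0$; but $\alpha^{r-1}\neq 0$. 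This two-line contradiction replaces the orientability, Stiefel--Whitney, $KO$, and residual-case arguments simultaneously, and also applies to topological (not just smooth) embeddings.
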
 
     
     Let us  call $E(k)$ and $N(k)$ to the minimum dimension of a Euclidean space  where you can embed $\R P^k$ and such that there exits a real symmetric non-singular bilinear map from $\R^k\times \R^k$ respectively. Then,  Theorems \ref{polemb} and \ref{nonembHopf} prove at the same time.
     
     \begin{thm}\label{T:embsym}
     	\mbox{}
     	\begin{enumerate}
     		\item $k+2\leq E(k)\leq N(k+1)-1\leq \begin{cases}
     		$2k-1$ \mbox{ if } $k$ \mbox{ is odd,}\\
     		$2k$ \mbox{ if } $k$ \mbox{ is even.}
     		\end{cases}$
     		\item The Fundamental Theorem of Algebra.  
     		\item $N(3)=5,$ \mbox{ and } $N(4)=6.$
     		\item $E(2)=4,$ \mbox{ and } $E(3)=5.$ 
     		
     	\end{enumerate}
     \end{thm}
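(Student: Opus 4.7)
The plan is to derive all four parts of Theorem~\ref{T:embsym} in a single pass by combining Theorem~\ref{nonembHopf}, the Hopf construction (a symmetric non-singular bilinear map $\mu:\R^{r}\times\R^{r}\to\R^{n}$ yields an embedding $\R P^{r-1}\hookrightarrow\R^{n-1}$), and the explicit polynomial products underlying Theorem~\ref{polemb}. Parts 3 and 4 will then fall out as immediate squeezes from the chain of inequalities in Part 1.

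First I would establish Part 1. The leftmost inequality $k+2\leq E(k)$ is exactly Theorem~\ref{nonembHopf} applied with $r=k+1$. The middle inequality $E(k)\leq N(k+1)-1$ is Hopf's construction: given an optimal symmetric non-singular bilinear $\mu:\R^{k+1}\times\R^{k+1}\to\R^{N(k+1)}$, the map $[x]\mapsto\mu(x,x)/\|\mu(x,x)\|$ embeds $\R P^{k}$ into the sphere $S^{N(k+1)-1}$, and stereographic projection through a point omitted by this embedding lands it in $\R^{N(k+1)-1}$. For the rightmost bound I split by the parity of $k$: for odd $k$, writing $k+1=2s$, the complex polynomial product on polynomials of degree at most $s-1$, read as a real map, gives a symmetric non-singular bilinear $\R^{k+1}\times\R^{k+1}\to\R^{2k}$, so $N(k+1)\leq 2k$ and $N(k+1)-1\leq 2k-1$; for even $k$, the real polynomial product on polynomials of degree at most $k$ gives $\R^{k+1}\times\R^{k+1}\to\R^{2k+1}$, so $N(k+1)\leq 2k+1$ and $N(k+1)-1\leq 2k$.

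Parts 3 and 4 are then sandwich arguments in the chain of Part 1, specialised to $k=2$ and $k=3$. For $k=2$ (even) one has $4\leq E(2)\leq N(3)-1\leq 4$, forcing $E(2)=4$ and $N(3)=5$. For $k=3$ (odd) one has $5\leq E(3)\leq N(4)-1\leq 5$, forcing $E(3)=5$ and $N(4)=6$. No new work beyond Part 1 is required.

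The main obstacle is Part 2, since the Fundamental Theorem of Algebra is not itself a statement about the invariants $E$ and $N$. The simplest way I see to incorporate it is to note that the non-singularity of the complex polynomial product invoked in Part 1 is equivalent to $\mathbb{C}[z]$ being an integral domain, which via factorization into linear factors is in turn equivalent to the FTA; alternatively, the same degree-theoretic mechanism behind Hopf's construction --- the map $z\mapsto p(z)/|p(z)|$ on a large circle $|z|=R$ has winding number $\deg p$, and so cannot extend over the disk unless $p$ vanishes somewhere --- yields the FTA directly. I would choose whichever formulation is most consonant with how the author proves Theorem~\ref{polemb}.
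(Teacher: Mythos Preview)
Your argument is correct and matches the paper's own justification, which is simply the line ``Theorems~\ref{polemb} and~\ref{nonembHopf} prove at the same time'' together with the Hopf construction described just before Theorem~\ref{polemb}; you have merely spelled out explicitly the squeeze that the paper leaves implicit.

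One small correction on Part~2: your first proposed link to the FTA is not right. You write that non-singularity of the complex polynomial product is ``equivalent to $\mathbb{C}[z]$ being an integral domain, which via factorization into linear factors is in turn equivalent to the FTA.'' The first equivalence is fine, but $\mathbb{C}[z]$ is an integral domain for any field $\mathbb{C}$, algebraically closed or not, so this is strictly weaker than the FTA and does not recover it. Your alternative---the degree/winding-number argument that a nonvanishing polynomial would give a nullhomotopic map $S^1\to S^1$ of nonzero degree---is the correct mechanism, and is exactly the topological proof Hopf's method yields; the paper itself does not elaborate on this point beyond listing it.
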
 
     
     And we remark  the following inequalities.  \begin{equation}\label{Eq:asIemb}
      	I_{as}(\R P^k)\leq E(k)\leq N(k+1)-1
      \end{equation}
      
   The previous theorem was proved by H. Hopf using  Alexander duality. N. Steenrod also proved this result \cite{Steenrod} applying steenrod operations. A survey on the topic was given by I. M. James in \cite{James}. From  the integral cohomology of $B(\R P^m,2)$ given in \cite{D8}, in Lemma \ref{alturadeb2} we get the height of an element which from Theorem \ref{T:embsym} and (\ref{Eq:asIemb}) in Section \ref{Sect:Antsyindex} give Theorem \ref{T:asindexPibound}, this  was  previously proved by M. Mahowald \cite{Mahowald} but using Stiefel-Whitney charasteristic classes in combination with steenrod operations, see also \cite{Levine}.  We emphasize in this article the novelty of the proof of Theorem \ref{T:asindexPibound}, which can be considered as a combination of group actions and integral cohomology.         
  
   \section{The Reduced Symmetric Product  of a Projective space and its cohomology}\label{Sect:redsymmprodcoh}
   
  Consider $B(\R P^k,2)$ which is also known as the reduced symmetric product of $\R P^k,$ S. Feder did the calculation of the cohomology ring with $\mathbb{Z}/2\mathbb{Z}$ coeficients of this spaces in \cite{feder}, and now we describe how did he do it. 
  
  It is possible to see $B(\R P^{r-1},2)$ like  pairs of different lines in $\R^r.$ This interpretation gives the fibration \begin{equation}
B(\R P^1,2)\to B(\R P^{r-1},2) \to G_{r,2} 
\end{equation}
where $G_{r,2}$ is the Grassmanian of unoriented two dimensional planes in $\R^{r}.$ The fiber is an open Moebius band and it has  $S^1$ as a deformation retract, this gives a deformation for the total space, which is denoted by $V_{r,2}(D_8),$ and we have a bundle $\eta:$
\begin{equation}\label{FibFeder}
S^1\to V_{r,2}(D_8)\to G_{r,2}
\end{equation}    
       
As Feder pointed out,  every pair of different lines in $\R^r$ define a plane, if we move these lines within that plane until they become mutually orthogonal, then we obtain the same bundle $\eta, $ which now we can   obtain from the canonical two plane bundle $\gamma$ over $G_{r,2}$ when taking its sphere bundle and identifying points which lie on pairs of orthogonal lines. Then we can see $\eta$ like a projectification of a real two dimensional   vector bundle.  

At this point it is important to cite the following very well known theorem which will lead us to some of the main  calculations concerning this work.

\begin{thm} \cite{Bott}\label{cohprojbunSWC}
	Let $\xi$  be a real $k-$dimensional vector bundle over $B,$ and $\R P(\xi)$ its projectification. Then $H^*(\R P(\xi),\mathbb{Z} /2)$ is a free module over $H^*(B,\mathbb{Z}/2)$ generated by $1,X_\xi,\cdots X_\xi^{k-1},$  where $X_\xi\in H^1(\R P(\xi),\mathbb{Z} /2)$ is equal to $w_1(S_\xi),$ $S_\xi$ is the canonical line bundle over $\R P(\xi).$ And there are unique classes $w_i(\xi)\in H^*(B,\mathbb{Z}/2)$ $i=0,\cdots, k, w_0=1,$ such that the equation $$\sum_{i=0}^{k}X_{\xi}^{k-i}w_i(\xi)=0$$ holds in $H^*(\R P(\xi),\mathbb{Z}/2).$ And $w_i(\xi)$ are the Siefel-Whitney classes of the bundle $\xi.$   
\end{thm}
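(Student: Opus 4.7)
The plan is to apply the Leray--Hirsch theorem to the fiber bundle $p\colon \R P(\xi)\to B$, whose typical fiber is $\R P^{k-1}$. The crucial observation is that the canonical line bundle $S_\xi$ restricts on each fiber $\R P^{k-1}\subset \R P(\xi)$ to the tautological line bundle over $\R P^{k-1}$, so the class $X_\xi=w_1(S_\xi)$ restricts to the generator of $H^1(\R P^{k-1},\mathbb{Z}/2)$. Consequently the classes $1,X_\xi,\ldots,X_\xi^{k-1}$ restrict on each fiber to a $\mathbb{Z}/2$-basis of $H^*(\R P^{k-1},\mathbb{Z}/2)\cong \mathbb{Z}/2[X]/(X^k)$, and Leray--Hirsch yields the free $H^*(B,\mathbb{Z}/2)$-module structure of $H^*(\R P(\xi),\mathbb{Z}/2)$ with the stated basis.

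Next I would derive the polynomial relation. Since $X_\xi^k\in H^k(\R P(\xi),\mathbb{Z}/2)$, freeness forces a unique expansion
$$X_\xi^k=\sum_{i=1}^{k}w_i(\xi)\,X_\xi^{k-i},\qquad w_i(\xi)\in H^i(B,\mathbb{Z}/2),$$
which, rearranged modulo $2$, is precisely the asserted relation $\sum_{i=0}^{k}X_\xi^{k-i}w_i(\xi)=0$ with $w_0=1$. Uniqueness of the classes $w_i(\xi)$ is an immediate consequence of the basis property furnished by Leray--Hirsch.

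Finally, to identify the $w_i(\xi)$ so constructed with the Stiefel--Whitney classes of $\xi$, I would verify the four axiomatic characterizing properties (naturality, dimension, Whitney product formula, and normalization on the tautological line bundle over $\R P^\infty$) and invoke their uniqueness. Naturality is immediate from the functoriality of the projectivization $\xi\mapsto \R P(\xi)$ and of $S_\xi$. The dimension axiom $w_i(\xi)=0$ for $i>k$ is built into the construction. The normalization axiom is verified by taking $\xi$ a line bundle: then $\R P(\xi)=B$, $X_\xi=w_1(\xi)$, and the defining relation collapses to $w_1(\xi)_{\text{new}}=w_1(\xi)$. The principal obstacle is the Whitney product formula, which I would handle via the splitting principle: iterating the projective bundle construction yields a map $f\colon B'\to B$ for which $f^*$ is injective on $\mathbb{Z}/2$-cohomology and $f^*\xi$ decomposes as a direct sum of line bundles $L_1\oplus\cdots\oplus L_k$. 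In this split setting the defining polynomial factors as $\prod_{j=1}^{k}\bigl(X_\xi+w_1(L_j)\bigr)=0$, so the $w_i(f^*\xi)$ are the elementary symmetric polynomials in the $w_1(L_j)$, from which the multiplicativity under direct sum follows; injectivity of $f^*$ then transports the identity back to $B$.
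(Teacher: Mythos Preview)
Your proof sketch is correct and is in fact the standard argument for this classical result (Leray--Hirsch to get the free module structure, then the splitting principle to verify the Stiefel--Whitney axioms). There is nothing to compare it against, however: the paper does not prove this theorem at all but simply quotes it from Bott's \emph{Lectures on $K(X)$} as a known tool, so your write-up goes beyond what the paper itself supplies.
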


The cohomology of $G_{r,2}$ has the  following  very well known description.

\begin{prop}\cite{feder}
	$$H^*(G_{r,2})\cong \frac{\frac{\mathbb{Z}}{2\mathbb{Z}}[v,w]}{I},$$  such that, $dim(v)=1,$  $dim(w)=2,$ and $I$ is the ideal, of the polynomial ring ${\frac{\mathbb{Z}}{2\mathbb{Z}}[v,w]},$ generated by the two elements: 
	$$\sum_{i=0}^{} \binom{r-1-i}{i}v^{r-1-2i}w^{i}=0 \mbox{ and } \sum_{i=0}^{} \binom{r-i}{i}v^{r-2i}w^{i}=0. $$

The Steenrod algebra is given by $Sq^1(w)=vw.$ 
\end{prop}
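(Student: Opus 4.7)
The plan is to use the canonical rank-$2$ bundle $\gamma$ over $G_{r,2}$ together with its orthogonal complement $\gamma^\perp$, a bundle of rank $r-2$, whose Whitney sum $\gamma\oplus\gamma^\perp$ is trivial. Set $v:=w_1(\gamma)$ and $w:=w_2(\gamma)$ in $H^*(G_{r,2};\F2)$. The triviality of the Whitney sum forces $w(\gamma)\,w(\gamma^\perp)=1$, so the total class of $\gamma^\perp$ is the formal inverse $(1+v+w)^{-1}$. Writing this inverse as $\sum_{n\geq 0}P_n$ with $P_n$ homogeneous of degree $n$, and comparing degree-$n$ terms in $(1+v+w)\sum P_n=1$, yields the recurrence $P_n=vP_{n-1}+wP_{n-2}$ with $P_0=1$. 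An induction using Pascal's identity $\binom{n-1-i}{i}+\binom{n-1-i}{i-1}=\binom{n-i}{i}$ then establishes the closed form $P_n=\sum_{i}\binom{n-i}{i}v^{n-2i}w^{i}$.

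Since $\gamma^\perp$ has rank $r-2$, its Stiefel--Whitney classes vanish in degrees strictly larger than $r-2$. In particular $P_{r-1}=w_{r-1}(\gamma^\perp)=0$ and $P_{r}=w_{r}(\gamma^\perp)=0$, and these are exactly the two relations displayed in the statement. This produces a well-defined graded ring homomorphism $\phi:\F2[v,w]/I\to H^*(G_{r,2};\F2)$. To upgrade $\phi$ to an isomorphism one invokes Borel's description of the cohomology of real Grassmannians: the classifying map $G_{r,2}\to BO(2)$ pulls back $H^*(BO(2);\F2)=\F2[w_1,w_2]$ onto $H^*(G_{r,2};\F2)$ with kernel generated by the Stiefel--Whitney classes of $\gamma^\perp$ beyond its rank, which is exactly $I$. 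A hands-on alternative is to observe that $\phi$ is surjective because $v,w$ generate the image of the classifying map, and then check equality of Poincar\'e series against the Schubert cell decomposition of $G_{r,2}$, which contributes $\binom{r}{2}$ cells. The Steenrod assertion $\sq1(w)=vw$ is then an immediate instance of the Wu formula $\sq1(w_2)=w_1 w_2$ applied to $\gamma$.

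The principal obstacle is showing that the two relations $P_{r-1}$ and $P_r$ exhaust the kernel of $\phi$. The recurrence together with the Pascal identity is routine, and the Wu-formula step is direct; ruling out hidden lower-degree relations is the delicate point and really requires either Borel's theorem or the explicit monomial-basis argument paired with the Schubert-cell count outlined above.
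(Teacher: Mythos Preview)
The paper does not supply its own proof of this proposition: it is quoted as a ``very well known description'' and attributed to Feder, with no argument given. So there is no in-paper proof to compare your proposal against.

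That said, your argument is the standard and correct one. Identifying $v=w_1(\gamma)$, $w=w_2(\gamma)$, expanding $(1+v+w)^{-1}=\sum_n P_n$ and recognising $P_n=w_n(\gamma^\perp)$, the vanishing of $w_{r-1}(\gamma^\perp)$ and $w_r(\gamma^\perp)$ gives exactly the two displayed relations; Borel's theorem (or the Schubert-cell dimension count you sketch) shows they generate the full ideal of relations. The Steenrod claim follows from the Wu formula, or more directly from the splitting principle: writing formally $v=a+b$ and $w=ab$ with $\deg a=\deg b=1$, one has $\sq1(ab)=a^2b+ab^2=(a+b)ab=vw$. One small cosmetic point: your recurrence should read $P_n=vP_{n-1}+wP_{n-2}$ only after reducing mod~$2$ the identity $P_n+vP_{n-1}+wP_{n-2}=0$ obtained from $(1+v+w)\sum P_n=1$; you have this right, but it is worth saying explicitly that the sign disappears because the coefficients are in $\F2$.
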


From this Proposition, Theorem \ref{cohprojbunSWC}, and the corresponding Stiefel-Whitney classes Feder obtained.

\begin{thm}\cite{feder}\label{T:cohunconfz2}
	$$H^*(B(\R P^{r-1},2),\mathbb{Z}/2\mathbb{Z})\cong \frac{\frac{\mathbb{Z}}{2\mathbb{Z}}[u,v,w]}{J},$$  such that, $dim(u)=dim(v)=1,$  $dim(w)=2,$ and $J$ is the ideal, of the polynomial ring ${\frac{\mathbb{Z}}{2\mathbb{Z}}[u,v,w]},$ generated by the three elements: 
	$$\sum_{i=0}^{} \binom{r-1-i}{i}v^{r-1-2i}w^{i} \mbox{, } \sum_{i=0}^{} \binom{r-i}{i}v^{r-2i}w^{i}, \mbox{ and }uv+u^2.$$	
\end{thm}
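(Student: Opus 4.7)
The strategy is to apply Theorem~\ref{cohprojbunSWC} to the bundle discussed just before it, recognising $B(\mathbb{R}P^{r-1},2)$ --- via the homotopy equivalence with $V_{r,2}(D_{8})$ already noted --- as the projectification $\mathbb{R}P(\xi)$ of a rank-$2$ real vector bundle $\xi$ over $G_{r,2}$, and then to combine Bott's presentation with the description of $H^{*}(G_{r,2};\mathbb{Z}/2\mathbb{Z})$ given in the preceding proposition.

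First I would record, from the preceding proposition, the identification $H^{*}(G_{r,2};\mathbb{Z}/2\mathbb{Z})\cong \mathbb{Z}/2\mathbb{Z}[v,w]/I$ together with its two binomial relations. Pulling back along the bundle projection $p\colon B(\mathbb{R}P^{r-1},2)\to G_{r,2}$, and using that Bott's theorem makes $p^{*}$ injective, these two sums become the first two generators of the ideal $J$ claimed in Theorem~\ref{T:cohunconfz2}.

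Next I would apply Theorem~\ref{cohprojbunSWC} to the rank-$2$ bundle $\xi$. Setting $u:=X_{\xi}=w_{1}(S_{\xi})\in H^{1}$, we obtain that $H^{*}(B(\mathbb{R}P^{r-1},2);\mathbb{Z}/2\mathbb{Z})$ is a free $H^{*}(G_{r,2};\mathbb{Z}/2\mathbb{Z})$-module on $\{1,u\}$ subject to the single Bott relation
$$u^{2}+u\,p^{*}w_{1}(\xi)+p^{*}w_{2}(\xi)=0.$$
Comparing this with the desired third generator $u^{2}+uv$ of $J$ reduces the theorem to the identities
$$w_{1}(\xi)=v,\qquad w_{2}(\xi)=0$$
in $H^{*}(G_{r,2};\mathbb{Z}/2\mathbb{Z})$.

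This SW-class computation is the main obstacle, because it requires a concrete grip on the bundle $\xi$. I would obtain it by unwinding the explicit construction of $\xi$ from the canonical $2$-plane bundle $\gamma$ that appears in the paragraph preceding Theorem~\ref{cohprojbunSWC} (sphere bundle of $\gamma$ with pairs of orthogonal unit vectors identified), and translating that geometric quotient into algebra by naturality and the Whitney product formula. The expected identification is that $\xi$ has the same total Stiefel--Whitney class as $\det\gamma\oplus\epsilon^{1}$, giving $w(\xi)=1+v$ and hence $w_{1}(\xi)=v$, $w_{2}(\xi)=0$. Substituting into the Bott relation produces the third generator $uv+u^{2}$ of $J$; combining with the free $\{1,u\}$-module structure from Step~2 and the inherited relations from Step~1 then yields the presentation asserted in Theorem~\ref{T:cohunconfz2}.
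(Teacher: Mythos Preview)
Your proposal is correct and follows essentially the same approach as the paper, which itself does not give a detailed proof but simply records that the result is obtained ``from this Proposition, Theorem~\ref{cohprojbunSWC}, and the corresponding Stiefel--Whitney classes'' (citing Feder). You have faithfully expanded that sketch: pull back the two binomial relations from $H^{*}(G_{r,2};\mathbb{Z}/2)$, apply Bott's projective-bundle theorem to the rank-$2$ bundle $\xi$ to get the free $\{1,u\}$-module structure and the relation $u^{2}+u\,w_{1}(\xi)+w_{2}(\xi)=0$, and then identify $w_{1}(\xi)=v$, $w_{2}(\xi)=0$ to obtain the third generator $u^{2}+uv$ of $J$; the only step the paper leaves implicit (and that you correctly flag as the main point to verify) is this Stiefel--Whitney class computation, which is precisely what Feder supplies in the cited reference.
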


 In this ring is fulfilled the following   algebraic fact, compare this result to Lemma \ref{alturadeb2}.
      
      \begin{cor}\label{Cor:heightv}
      	Let $r\geq 4,$ $r=2^k+s, 1\leq s\leq 2^k,$ and consider the element $v\in H^*(B(P^{r-1},2);\mathbb{Z}/2\mathbb{Z}).$ Then, $$height(v_1)=2^{k+1}-1$$ 
      \end{cor}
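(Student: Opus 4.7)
The plan is to work in three steps.

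First, I would reduce to a simpler ring. Since the third generator $u^2 + uv$ of $J$ involves $u$, it contributes no relations purely in $v$ and $w$: setting $u=0$ in any representation $v^N = A\,R_1 + B\,R_2 + C(u^2+uv)$ produces $v^N \in (R_1, R_2) \subseteq \mathbb{F}_2[v,w]$. So it suffices to compute the height of $v$ in $R := \mathbb{F}_2[v,w]/(R_1,R_2) \cong H^*(G_{r,2};\mathbb{F}_2)$. Writing $h_n := \sum_{i} \binom{n-i}{i} v^{n-2i}w^i$, we have $R_1 = h_{r-1}$ and $R_2 = h_r$, and the identity $(1+vt+wt^2)\sum_n h_n t^n = 1$ yields the recursion $h_n = v\,h_{n-1} + w\,h_{n-2}$; an induction then gives $h_n = 0$ in $R$ for every $n \geq r-1$.

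For the upper bound $v^{2^{k+1}-1}=0$: since $r \leq 2^{k+1}$, we have $h_{2^{k+1}-1} = 0$ in $R$. By Lucas's theorem, the binary expansion of $N := 2^{k+1}-1$ has $1$s in all positions $0$ through $k$; for each $1 \leq i \leq 2^k-1$, the subtraction $N-i$ produces a $0$-bit in some position where $i$ has a $1$-bit, so $\binom{N-i}{i} \equiv 0 \pmod 2$. Only the $i=0$ summand of $h_N$ survives, giving $h_N = v^N$, and hence $v^{2^{k+1}-1}=0$ in $R$.

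For the lower bound $v^{2^{k+1}-2} \neq 0$, I pass to $\widetilde R := \mathbb{F}_2[\alpha,\beta]/(h_{r-1},h_r)$ with $v = \alpha+\beta$ and $w = \alpha\beta$. The identity $h_r = \alpha^r + \beta\,h_{r-1}$ shows $(h_{r-1},h_r) = (h_{r-1},\alpha^r,\beta^r)$, and $R$ embeds in $\widetilde R$ because $\mathbb{F}_2[\alpha,\beta]$ is free of rank $2$ over $\mathbb{F}_2[v,w]$. In $\widetilde R$, expanding
\[
v^{2^{k+1}-2}w^{s-1} = (\alpha+\beta)^{2^{k+1}-2}(\alpha\beta)^{s-1}
\]
via Lucas (only even exponents survive in $(\alpha+\beta)^{2^{k+1}-2}$) and then killing $\alpha^r = \beta^r = 0$ cuts the sum down to exactly two monomials, $\alpha^{r-3}\beta^{r-1} + \alpha^{r-1}\beta^{r-3}$. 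On the other hand, multiplying the relation $h_{r-1}$ by $\alpha^{r-3}$ and reducing modulo $\alpha^r,\beta^r$ yields
\[
\alpha^{r-3}\beta^{r-1} + \alpha^{r-2}\beta^{r-2} + \alpha^{r-1}\beta^{r-3} = 0 \quad \text{in } \widetilde R,
\]
so $v^{2^{k+1}-2}w^{s-1} = \alpha^{r-2}\beta^{r-2} = w^{r-2}$. A dimension count in degree $2(r-2)$ of $\widetilde R$ (three monomials $\alpha^{r-3}\beta^{r-1}, \alpha^{r-2}\beta^{r-2}, \alpha^{r-1}\beta^{r-3}$ modulo the single relation above) shows $w^{r-2} \neq 0$, hence $v^{2^{k+1}-2} \neq 0$ in $R$. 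The main obstacle is orchestrating the two Lucas-style calculations so that the two-term expansion of $v^{2^{k+1}-2}w^{s-1}$ meshes precisely with the three-term $h_{r-1}$-relation in the top degree.
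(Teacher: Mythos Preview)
The paper does not actually supply a proof of this corollary: it is stated as an algebraic fact following from Feder's presentation (Theorem~\ref{T:cohunconfz2}), with the sentence ``In this ring is fulfilled the following algebraic fact, compare this result to Lemma~\ref{alturadeb2}.'' The integral analogue (Lemma~\ref{alturadeb2}) is what the paper proves in detail; the $\mathbb{Z}/2$ statement here is left as a comparison point, implicitly classical.

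Your argument is correct and fills in what the paper omits. A few remarks on the steps:
\begin{itemize}
\item The reduction in Step~1 is exactly right: your set-$u=0$ argument shows $J\cap\mathbb{F}_2[v,w]=(R_1,R_2)$, so $R=H^*(G_{r,2};\mathbb{F}_2)$ embeds in $H^*(B(\mathbb{R}P^{r-1},2);\mathbb{F}_2)$ and the heights agree.
\item The upper bound via $h_N=v^N$ when $N=2^{k+1}-1$ is a clean Lucas computation; since $N$ has all bits $0,\dots,k$ set and $i\le 2^k-1$, the bits of $i$ and $N-i$ in positions $0,\dots,k-1$ are complementary, so $\binom{N-i}{i}$ is even for $i\ge 1$.
\item In the lower bound, your key identity $h_n=\sum_{i=0}^{n}\alpha^i\beta^{n-i}$ makes $(h_{r-1},h_r)=(h_{r-1},\alpha^r,\beta^r)$ transparent, and the freeness of $\mathbb{F}_2[\alpha,\beta]$ over $\mathbb{F}_2[v,w]$ gives the embedding $R\hookrightarrow\widetilde R$. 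Your ``single relation'' claim in top degree $2r-4$ is justified: every multiplier $\alpha^a\beta^{r-3-a}$ times $h_{r-1}$, reduced mod $(\alpha^r,\beta^r)$, yields the \emph{same} three-term expression $\alpha^{r-3}\beta^{r-1}+\alpha^{r-2}\beta^{r-2}+\alpha^{r-1}\beta^{r-3}$, so the quotient in that degree is $2$-dimensional and $w^{r-2}=\alpha^{r-2}\beta^{r-2}\neq 0$.
\end{itemize}

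In short, you have given a complete, elementary proof via Chern roots for a result the paper only cites. The paper's own machinery (Bockstein spectral sequence, integral ring relations) is aimed at the harder integral statement in Lemma~\ref{alturadeb2}; your splitting-principle computation is the natural route for the $\mathbb{Z}/2$ version and is independent of that machinery.
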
 
       
\section{Integral Cohomology of the Reduced Symmetric Product and Antisymmetric Index of Real Projective Spaces of dimensions $5$ and $6.$}\label{Sect:Antsyindex}\mbox{}

This section is about the calculation  of  $I_{as}(\R P^{2^m+1)}$ in comparison to $E(\R P^{2^m+1)}),$ using the integral cohomology of $B(\R P^k,2),$  which was obtained in \cite{D8}  from the previous knowledge of $\mathbb{Z}/2\mathbb{Z}$ cohomology described in \ref{T:cohunconfz2} and the application of the Bokstein spectral sequence. The rings obtained are as follows.

  \begin{thm}\cite{D8}\label{chb2pmpar}
  	Let $m=2t$, $t\geq1$. The integral cohomology ring $H^*(B(\R P^m, 2))$ is generated by five classes $a_2, b_2, c_3, d_4, e_{2m-1},$ subscripts denote dimension of the corresponding element, subject only to the relations (where we are omitting  the subscripts):
  	\begin{enumerate}
  		\item $2a = 2b = 2c = 4d = 0;$
  		\item $b^2 = ab;$
  		\item $c^2 = ad;$
  		\item $\sum\binom{i+j}{j}a^icd^j=0,\,$
  		where the sum runs over $\,i,j\geq 0\,$ with
  		$\,i+2j=t-1;$
  		
  		\vspace{-2.5mm}
  		\item $\sum\binom{i+j}{j}a^ibd^j=
  		\begin{cases}2d^{\frac{t+1}2},&\!\!\!t\;\mbox{odd},\\0,&
  		\!\!\!t\;\mbox{even},\end{cases}\,$ where the sum runs
  		over $\,i,j\geq 0\,$ with $\,i+2j=t;$
  		\vspace{-2.2mm}
  		\item $\sum\binom{i+j}{j}a^{i+1}d^j=0,\,$ where the sum runs
  		over $\,i,j\geq 0\,$ with $\,i+2j=t;$
  		\item $\sum\binom{i+j}{j}a^icd^j=0,\,$ where the sum runs
  		over $\,i,j\geq 0\,$ with $\,i+2j=t;$
  		
  		\vspace{-2.5mm}
  		\item $\sum\binom{i+j}{j}a^ibd^{j+1}=\begin{cases}
  		2d^{\frac{t+2}2},&\!\!\!t\;\mbox{even},\\0,&
  		\!\!\!t\;\mbox{odd},\end{cases}\,$ where the sum runs
  		over $\,i,j\geq 0\,$ with $\,i+2j=t-1;$
  		
  		\vspace{-2.5mm}
  		\item $d^t=0;$
  		\item $e\varepsilon= 0,\,$ for $\varepsilon\in\{a,b,c,d,e\}$.
  	\end{enumerate}
  \end{thm}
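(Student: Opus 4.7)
The plan is to compute $H^*(B(\R P^m,2);\mathbb{Z})$ by running the Bockstein spectral sequence on the mod $2$ cohomology of Theorem \ref{T:cohunconfz2}. With $m=2t$ and $r=m+1=2t+1$, the first differential $d_1=\sq1$ is determined on the three generators by $\sq1(u)=u^2$, $\sq1(v)=v^2$, and $\sq1(w)=vw$, the last inherited from the Grassmannian via the bundle projection in (\ref{FibFeder}) and naturality of $\sq1$. The Cartan formula then propagates $\sq1$ to every monomial in the quotient ring $\mathbb{F}_2[u,v,w]/J$, so $E_2$ is computable in principle.

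Next I would identify the five integer generators as lifts of specific $d_1$-cycles: $a$ as a lift of $u^2=uv$, $b$ of $v^2$, $d$ of $w^2$, $c$ of a degree-$3$ $\sq1$-cycle built from $uw$ and $vw$, and $e$ of the top class in dimension $2m-1$. The multiplicative relations (2) $b^2=ab$ and (3) $c^2=ad$ then follow from mod $2$ reduction, combined with the fact that both sides are $2$-torsion with the same mod $2$ image. The additive orders in (1), as well as the ``$2d^{(t+1)/2}$'' and ``$2d^{(t+2)/2}$'' terms appearing in (5) and (8), are read off by pushing past $E_2$: the generators $a,b,c$ are killed at $E_2$, giving $\mathbb{Z}/2$ summands, while $d$ survives one further Bockstein, yielding the $\mathbb{Z}/4$ summand. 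Any mod $2$ relation that is not a mod $4$ relation picks up a factor of $2$ upon integer lifting, which is precisely the mechanism producing the ``$2$'' prefactors.

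Relations (4)--(8) are then lifts of the mod $2$ ideal $J$, obtained by rewriting each of Feder's three generators of $J$ in terms of the integer lifts, using $u^2=uv$ (so that $u^2$ is absorbed into $a$) and the identifications $v^2\leftrightarrow b$, $w^2\leftrightarrow d$, $vw\leftrightarrow c$. Splitting each mod $2$ relation by parity of the powers of $v$ and $w$ produces the four relations (4), (5), (7), (8) with summation ranges $i+2j\in\{t-1,t\}$; relation (6) is the analogous consequence involving only $a$ and $d$. Relation (9) $d^t=0$ reflects the vanishing of $w^{2t}$ in $H^*(G_{r,2};\mathbb{F}_2)$ coming from the first generator of the Grassmannian ideal, and (10) is forced by $e$ sitting in top dimension, so every cup product with a positive-dimensional class lands in a zero group.

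The main obstacle is the middle step: extracting the precise combinatorial form of (4)--(8) with the correct binomial coefficients $\binom{i+j}{j}$ and the right ``$2d^{\ast}$'' corrections, together with the parity dichotomy between $t$ odd and $t$ even. This reduces to a careful analysis of the higher Bocksteins in the spectral sequence, distinguishing mod $2$ cycles that lift to $\mathbb{Z}/4$ cycles from those that do not. Everything else --- the multiplicative relations (2), (3), (9), (10) and the additive orders in (1) --- follows mechanically once this bookkeeping is complete.
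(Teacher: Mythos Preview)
The paper does not actually prove this theorem: it is quoted verbatim from \cite{D8}, and the only indication of method is the single sentence in the opening paragraph of Section~\ref{Sect:Antsyindex} stating that the integral cohomology ``was obtained in \cite{D8} from the previous knowledge of $\mathbb{Z}/2\mathbb{Z}$ cohomology described in \ref{T:cohunconfz2} and the application of the Bokstein spectral sequence.'' There is therefore no in-paper proof to compare your sketch against.

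That said, your outline follows exactly the strategy the paper attributes to \cite{D8}: start from Feder's $\mathbb{F}_2$-ring (Theorem~\ref{T:cohunconfz2}), run the Bockstein spectral sequence with $d_1=\Sq^1$, and lift cycles to integral generators. Your identifications $a\leftrightarrow u^2$, $b\leftrightarrow v^2$, $d\leftrightarrow w^2$, $c\leftrightarrow$ (degree-$3$ $\Sq^1$-cycle) are the natural ones, and your explanation of the $\mathbb{Z}/4$ on $d$ via survival to $E_3$ is the right mechanism for the ``$2d^{\ast}$'' terms in (5) and (8). The honest admission that the combinatorics of (4)--(8) and the odd/even dichotomy on $t$ constitute the real work is accurate; that step requires tracking which $\Sq^1$-cycles are hit by higher Bocksteins and is where the binomial coefficients $\binom{i+j}{j}$ emerge from rewriting Feder's Grassmannian relations in the lifted variables. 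Your sketch is consistent with the cited method, but for a complete argument you would need to consult \cite{D8} directly, since the present paper supplies none of the details.
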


\begin{thm}\cite{D8}\label{chb2pmimpar}
	Let $m=2t+1$, $t\geq0$. The integral cohomology ring $H^*(B(P^m, 2))$ is generated by five classes $a_2, b_2, c_3, d_4, e_m,$  subscripts denote dimension of the corresponding element and we omit them from now on, subject only to the  relations:
	\begin{enumerate}
		\item $2a = 2b = 2c = 4d = 0;$
		\item $b^2 = ab;$
		\item $c^2 = ad;$
		
		\vspace{-2.5mm}
		\item $\sum\binom{i+j}{j}a^ibd^j=
		\begin{cases}2d^{\frac{t+1}2},&\!\!\!t\;\mbox{odd},\\0,&
		\!\!\!t\;\mbox{even},\end{cases}\,$ where the sum runs
		over $\,i,j\geq 0\,$ with $\,i+2j=t;$
		
		\vspace{-2.2mm}
		\item $\sum\binom{i+j}{j}a^{i+1}d^j=0,\,$ where the sum runs
		over $\,i,j\geq 0\,$ with $\,i+2j=t;$
		\item $\sum\binom{i+j}{j}a^icd^j=0,\,$ where the sum runs
		over $\,i,j\geq 0\,$ with $\,i+2j=t;$
		
		\vspace{-2.5mm}
		\item $\sum\binom{i+j}{j}a^ibd^j=\begin{cases}
		2d^{\frac{t+2}2},&\!\!\!t\;\mbox{even},\\0,&
		\!\!\!t\;\mbox{odd},\end{cases}\,$ where the sum runs
		over $\,i,j\geq 0\,$ with $\,i+2j=t+1;$
		
		\vspace{-2.2mm}
		\item $\sum\binom{i+j}{j}a^{i+1}d^j=0,\,$ where the sum runs
		over $\,i,j\geq 0\,$ with $\,i+2j=t+1;$
		\item $\sum\binom{i+j}{j}a^icd^j=0,\,$ where the sum runs
		over $\,i,j\geq 0\,$ with $\,i+2j=t+1;$
		\item $d^{t+1}=0;$
		\item
		\begin{enumerate}
			\item $e^2=0,$
			\item $\mu e=\kappa b^{\kappa}cd^l,$
			\item $ce=\eta d^{l+1},$
			\item $\mbox{ and } de=\sum_{i=1}^l\binom{t-i}{i-1}a^{t-2i}bcd^i$.
			
			Here $\mu\in\{a,b\},t=2l+\kappa $  with $\kappa\in\{0,1\}$ and $\eta=b,$ if $\kappa=1$ whereas $\eta=2$ if
			$\kappa = 0$, except perhaps for $m = 5.$
		\end{enumerate}
		
	\end{enumerate}
\end{thm}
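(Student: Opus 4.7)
The plan is to derive the integral ring structure from Feder's mod-$2$ description in Theorem \ref{T:cohunconfz2} using the Bockstein spectral sequence $\{E_r,d_r\}$, whose $E_1$ page is the ring $H^*(B(\mathbb{R}P^m,2);\mathbb{F}_2)=\mathbb{F}_2[u,v,w]/J$ and whose $E_\infty$ page records the $2$-torsion filtration of integral cohomology modulo the torsion-free part. Since the first differential is $d_1=Sq^1$, one starts by computing $Sq^1$ on generators: $Sq^1(u)=u^2$ and $Sq^1(v)=v^2$ by the squaring rule in degree $1$, and $Sq^1(w)=vw$ inherited from the Grassmannian via the projective bundle description of $\eta$ in (\ref{FibFeder}). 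Combining these with the relation $uv=u^2$ and the two Grassmannian relations, one identifies $E_2$ and a preferred set of mod-$2$ classes that admit integral lifts: call them $a,b,c,d,e$, lifting $v^2$, $u^2=uv$, $uw$ (up to correction by $vw$), $w^2$, and the top-dimensional monomial of degree $m$, respectively, so that the asserted degrees $2,2,3,4,m$ fall out of this identification.

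The next step is to pin down the order of each generator and the relations (1)--(10). From the Bockstein long exact sequence one reads off $2a=2b=2c=0$ directly, since each of these classes lies in the image of $Sq^1$ on a degree-$1$ or degree-$2$ generator. To produce the relation $4d=0$ while $2d\neq 0$, one must invoke the \emph{second} Bockstein $d_2$: the class $w^2$ survives $d_1$, but $2d$ is killed by $d_2$, the standard spectral-sequence signature of $4$-torsion. The quadratic relations $b^2=ab$ and $c^2=ad$ follow from direct comparison of mod-$2$ products with integral lifts, using that there is no odd torsion (so a mod-$2$ identity determines the integral product up to $2$-torsion). Relations (4)--(9) are then the integral incarnations of the Grassmannian relations of Theorem \ref{T:cohunconfz2}, with the corrections $2d^{(t+1)/2}$ and $2d^{(t+2)/2}$ arising precisely from $2$-torsion classes in the target dimension which vanish mod $2$ but survive integrally, detected through the extension problem in the Bockstein spectral sequence.

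Finally, the relations (11) for the top class $e$ are obtained by tracking $e$ as an integral lift of the top mod-$2$ generator of degree $m$ through the spectral sequence, and then computing $e^2$, $\mu e$, $ce$, $de$ by multiplying mod-$2$ representatives and re-expressing the outcome in the integral presentation already constructed. The main obstacle will be the detailed bookkeeping of the second Bockstein so as to produce both the $4$-torsion on $d$ and the precise coefficients $2$ in items (4) and (7); the dimensional exception at $m=5$ flagged at the end of (11) is exactly where this bookkeeping becomes most delicate, because for small $m$ several of the monomials used to identify $ce$ and $de$ collapse into unexpected combinations of the low-degree generators, and one has to verify by direct calculation which mod-$2$ identities actually lift on the nose versus up to a $2$-torsion error.
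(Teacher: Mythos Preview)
The paper does not itself prove this theorem: it is quoted verbatim from \cite{D8}, and the only indication given here about the method is the sentence in the introduction to Section~\ref{Sect:Antsyindex} stating that the result ``was obtained in \cite{D8} from the previous knowledge of $\mathbb{Z}/2\mathbb{Z}$ cohomology described in \ref{T:cohunconfz2} and the application of the Bokstein spectral sequence.'' Your proposal follows exactly this route---start from Feder's $\mathbb{F}_2$ presentation, compute $Sq^1$ to get $E_2$, detect the $4$-torsion on $d$ via a higher Bockstein, and lift the Grassmannian relations---so it is in full agreement with the approach the paper attributes to \cite{D8}.
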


 Note that this description is presented in a little bit more explicit way than \cite{D8}, the reason is the proof of next lemma where we use this algebraic structures to obtain the height of a certain relevant element.

\begin{lema}\label{alturadeb2}
	Consider the element $b\in H^2(B(P^m,2);\mathbb{Z})$ coming from  previous theorems. 	If $k$ is the smallest positive integer such that  $b^k=0:$ 
	
	Suppose $m=2^e.$ Then
  $$k=2^e.$$
	
	On the other hand, let $m\in \{2^e+1,\ldots,2^{e+1}-1\}.$  Then  $$k=2^e+1.$$
	
\end{lema}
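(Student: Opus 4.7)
The plan is to reduce the question of the height of $b$ to a question purely about powers of $a$. Since relation (2) in both Theorems~\ref{chb2pmpar} and~\ref{chb2pmimpar} reads $b^{2}=ab$, an easy induction yields $b^{n+1}=a^{n}b$ for every $n\geq 0$. Thus the height $k$ equals $1+\min\{\,s\geq 0:\ a^{s}b=0\,\}$, and the task reduces to locating the smallest power of $a$ that annihilates $b$.

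For this I plan to exploit the two pairs of relations in Theorems~\ref{chb2pmpar}–\ref{chb2pmimpar} that involve the monomials $a^{i}bd^{j}$: namely (5) and (8) in the even case $m=2t$ and (4) and (7) in the odd case $m=2t+1$. Each such relation has the schematic form
\[
\sum_{i+2j=t}\binom{i+j}{j}\,a^{i}bd^{j}\;=\;\varepsilon,
\]
with leading ($j=0$) summand $a^{t}b$ and right-hand side $\varepsilon\in\{0,\,2d^{(t+1)/2},\,2d^{(t+2)/2}\}$. Because $2a=0$, multiplying any such relation by a positive power of $a$ annihilates the $2$-torsion right-hand side and yields the recursion
\[
a^{t+p}b\;=\;\sum_{j\geq 1}\binom{t-j}{j}\,a^{t+p-2j}b\,d^{j},\qquad p\geq 1.
\]
Iterating this together with the companion $a$-only relation -- (6) in the even case and (5) in the odd case -- which expresses $a^{t+1}$ as a $d$-multiple of lower powers of $a$, and with the nilpotence $d^{t}=0$ (respectively $d^{t+1}=0$) from relation (9) (respectively (10)), one rewrites every $a^{t+p}b$ as a $\mathbb{Z}$-linear combination of monomials of the form $a^{*}bd^{*}$ and $2d^{*}$, which is forced to zero once every term carries a factor of $d^{t}$ (respectively $d^{t+1}$) or is killed by $2a=0$.

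The exact threshold at which this collapse happens is determined by the parities of the binomials $\binom{i+j}{j}$, analyzed through Lucas' theorem applied to the $2$-adic expansion of $t$. When $m=2^{e}$, the binary digits of $t$ align with the nilpotence exponent so that the iterated reduction collapses to zero exactly at $a^{2^{e}-1}b$, yielding $k=2^{e}$. For $m\in\{2^{e}+1,\dots,2^{e+1}-1\}$ the binary expansion of $t$ carries at least one additional digit, the cancellation is delayed by one step, and $a^{2^{e}-1}b$ survives as a nonzero $2$-torsion class which is only killed by one further multiplication by $a$, giving $k=2^{e}+1$.

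The main obstacle will be the meticulous bookkeeping of the binomial coefficients $\binom{i+j}{j}\bmod 2$ across the four relations in play and the accompanying alternation between the $t$-even and $t$-odd sub-cases, since this alternation flips the residual right-hand side $2d^{\text{power}}$ on and off and is ultimately responsible for the one-step offset between the two halves of the Lemma. The non-vanishing of $b^{k-1}$ is then confirmed by reading off the leftover monomial produced by the iteration and verifying, using the complete additive description of $H^{*}(B(\R P^{m},2))$ implicit in Theorems~\ref{chb2pmpar}–\ref{chb2pmimpar} together with $4d=0$ but $2d\neq 0$ in the relevant range, that it is a nontrivial element of the integral cohomology ring.
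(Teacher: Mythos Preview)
Your plan is essentially the same as the paper's own argument: both reduce via $b^{n+1}=a^{n}b$ to locating the least $s$ with $a^{s}b=0$, then exploit the $a^{i}bd^{j}$ relations together with mod~$2$ vanishing of the relevant binomials and the nilpotence of $d$ to pin down that threshold, checking non-vanishing by identifying $b^{k-1}$ with a surviving class of the form $2d^{\ast}$. The only cosmetic difference is that the paper carries this out explicitly for the endpoint cases $m=2^{e}+1$ and $m=2^{e+1}-1$ (plus $m=2^{e}$) and then appeals to induction, whereas you phrase the binomial analysis uniformly via Lucas; either way the substance is identical.
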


\begin{proof}
	First suppose $m=2^e+1;$ from 4) and 7) in Theorem \ref{chb2pmimpar} we have  \begin{equation}\label{parahb1}a^tb=\binom{t-1}{1}a^{t-2}bd+\binom{t-2}{2}a^{t-4}bd^2+\cdots\end{equation}
	and \begin{equation}\label{parahb2}a^{t-1}bd=\binom{t-2}{1}a^{t-3}bd^2+\binom{t-3}{2}a^{t-5}bd^3+\cdots+2d^{\frac{t+2}{2}}.\end{equation} Note that, in this case, $t=2^{e-1},$ then using the formula $$\binom{2^{e-1}-k}{k-1}\equiv 0 \mod 2\;\forall\; k$$ it follows that
	\begin{equation}\label{eqalt}
	\mbox{$a^{t-1}bd=2d^{\frac{t+2}{2}}$ and therefore
		$a^tbd=0.$}
	\end{equation}
	
	Multiplying  (\ref{parahb1}) by $a^{t-1}$ and using (\ref{eqalt}) we  get
	
	\begin{equation}\label{hb22e}
	b^{2^e}=b^{m-1}=b^{2t}=a^{2t-1}b=a^{t-1}bd^{t/2}=2d^{\frac{2t+2}{2}}d^{t/2-1}=2d^{t}\neq 0.
	\end{equation}
	
	For dimensional conditions in Theorem \ref{chb2pmimpar}, it is obvious that $b^{2^{e}+1}=0,$ therefore the proof is done for this case. Note that this is all what we need to prove the main result of this work.

	If we consider the case $m=2^{e+1}-1,$ then $t=2^e-1,$ and from 4  in \ref{chb2pmimpar}  we get
	\begin{equation}\label{parahb11}a^tb=\binom{t-1}{1}a^{t-2}bd+\binom{t-2}{2}a^{t-4}bd^2+\cdots + 2d^{\frac{t+1}{2}}\end{equation} 
	and from here applying 
	$$\binom{2^{e}-k}{k-1}\equiv 0 \mod 2\;\forall\; k $$ we get
	\begin{equation} \label{hb22e1}
b^{2^e}=	b^{t+1}=a^{t}b=2d^{\frac{t+1}{2}}\neq0 \mbox{ and }b^{2^e+1}=b^{t+2}=a^{t+1}b=aa^tb=2ad^{\frac{t+1}{2}}=0.
	\end{equation}
	
	It is not difficult to analyze the case $m=2^e$ in a similar manner, but this time using \ref{chb2pmpar}, the rest of the cases now follow from these three cases and an inductive argument.
	
\end{proof}

Now, suppose there exist an antisymmetric map $f:F(\R P^{r-1},2) \to S^{n-1},$ passing to the orbit spaces we have the following diagram.

\begin{equation}\label{Diag:conf}
\xymatrix{F(\R P^{r-1},2)\ar[r]^f \ar[d] & S^{n-1}\ar[d]\\
	B(P^{r-1},2)\ar[r]^{P(f)}&P^{n-1}}
\end{equation}

Then the generator $z$ in the second cohomology group of $\R P^{n-1}$ corresponds to the element $b_2$ appearing in \ref{alturadeb2} under $P(f)^*,$ this fact is proved in \cite{D8} using group actions on the Stiefel manifold $V_{r,2},$ as is described now.

 Consider the dihedral group of order $8$
 
 \begin{equation} \label{Dih}
 D_8=\{t,y|t^4=y^2=1,yt=t^3y\}
 \end{equation}
 
 $D_8$ acts freely on the Stiefel manifold $V_{r,2}\subset \R^r\times\R^r$ as $t(v_1,v_2)=(v_2,-v_1), y(v_1,v_2)=(v_1,-v_2).$ 
 
 Let  $H=<y,yt^2>\subset D_8$ denote the subgroup of $D_8$ generated by $y,yt^2,$ it is not difficult to prove that $H\cong \zt2 \times \zt2.$ If we restrict the action of $D_8$ to $H$ on $V_{r,2},$ then the orbit space $\frac{V_{r,2}}{H},$ has the same homotopy type than $F(\R P^{r-1},2)$ and same thing for $\frac{V_{r,2}}{D_8}$ in comparison to $B(\R P^{r-1},2),$  $B(\R P^{r-1},2)$ has the same homotopy type than $\frac{V_{r,2}}{D_8}$:   To prove it for $ B(\R P^{r-1}),$ just consider the map $g:V_{r,2}\to B(\R P^{r-1},2)$
 given by $g(v_1,v_2)=([v_1],[v_2])$ this map clearly pass to the quotient and gives a map $\frac{V_{r,2}}{D_8}\to B(\R P^{r-1},2).$ The homotopy inverse is provided by Gram-Schmidt orthogonalization  process, applied to generators of a pair of different lines in $\R^{r},$ everything is well defined and works right due to identifications on the orbit space, and the argument is similar for $F(\R P^{r-1},2),$ 
 compare to the bundle (\ref{FibFeder}) in Section  \ref{Sect:redsymmprodcoh}. In particular this gives a homotopy type fibration. \begin{equation}\label{Eq:fibresimp}
  V_{r,2}\to B(\R P^{r-1},2)\to BD_8
 \end{equation} 
 where $BD_8$ is the classifying space of $D_8.$ 
 
 Now the exact sequence of groups $$1\to \zt2 \times \zt2\cong H \to D_8\to \frac{D_8}{H}\cong \zt2\to 1,$$    (\ref{Eq:fibresimp}),  and the double covering fibration $$F(\R P^{r-1})\to B(\R P,2) \to \R P^\infty$$ where used to  prove that, related to Diagram (\ref{Diag:conf}) $P(f)^*(z)=b.$ Then a contradiction argument and \ref{alturadeb2}, prove the following. 

\begin{thm}\label{T:asindexPibound}
	$$2^{m+1}< I_{as}(\R P^{2^m+1})$$
\end{thm}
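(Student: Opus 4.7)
The plan is a proof by contradiction using Lemma \ref{alturadeb2} together with the identification $P(f)^{*}(z)=b$ recalled just above the statement of the theorem. Suppose, for contradiction, that $I_{as}(\R P^{2^{m}+1})\leq 2^{m+1}$. Then there exists an antisymmetric map $f\colon F(\R P^{2^{m}+1},2)\to S^{n-1}$ with $n\leq 2^{m+1}$. Passing to the symmetric/antipodal orbit spaces produces the map $P(f)\colon B(\R P^{2^{m}+1},2)\to \R P^{n-1}$ of Diagram (\ref{Diag:conf}).

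Next I would invoke the key fact established via the $D_{8}$-action on $V_{r,2}$, namely that the generator $z\in H^{2}(\R P^{n-1};\mathbb{Z})$ of the torsion subgroup satisfies $P(f)^{*}(z)=b$, where $b\in H^{2}(B(\R P^{2^{m}+1},2);\mathbb{Z})$ is the integral degree-two class appearing in Theorem \ref{chb2pmimpar}. By naturality, $P(f)^{*}(z^{2^{m}})=b^{2^{m}}$. Since $2^{m}+1\in\{2^{m}+1,\ldots,2^{m+1}-1\}$, Lemma \ref{alturadeb2} (applied with $e=m$) yields $b^{2^{m}}\neq 0$. Therefore $z^{2^{m}}\neq 0$ in $H^{2^{m+1}}(\R P^{n-1};\mathbb{Z})$. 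However, the integral cohomology of $\R P^{n-1}$ in degree $2^{m+1}$ is nonzero only when $2^{m+1}\leq n-1$, which forces $n\geq 2^{m+1}+1$, contradicting $n\leq 2^{m+1}$. The inequality $2^{m+1}<I_{as}(\R P^{2^{m}+1})$ follows.

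The main obstacle in the argument is the identification $P(f)^{*}(z)=b$; everything else is a direct naturality argument combined with the purely algebraic height computation of Lemma \ref{alturadeb2} and the well-known structure of $H^{*}(\R P^{n-1};\mathbb{Z})$. That identification is the content of the paragraphs immediately preceding the theorem and is proved in \cite{D8} by comparing the homotopy fibration (\ref{Eq:fibresimp}) over $BD_{8}$ with the double cover $F(\R P^{r-1},2)\to B(\R P^{r-1},2)\to \R P^{\infty}$ associated to the short exact sequence $1\to H\to D_{8}\to \zt2\to 1$. Were one to redo that step from scratch, one would chase the generator $z$ through the Serre spectral sequences of these two fibrations; but since the identification is already available, the proof of the theorem reduces to the contradiction outlined above.
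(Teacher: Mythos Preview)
Your proof is correct and follows essentially the same route as the paper's: assume an antisymmetric map into $S^{n-1}$ with $n\le 2^{m+1}$ exists, pass to orbit spaces, use the identification $P(f)^{*}(z)=b$, and derive a contradiction from $z^{2^{m}}=0$ in $H^{*}(\R P^{n-1};\mathbb{Z})$ versus $b^{2^{m}}\neq 0$ from Lemma~\ref{alturadeb2}. Your write-up is simply a more detailed expansion of the paper's one-line argument, and your extra paragraph correctly locates the nontrivial input as the identification $P(f)^{*}(z)=b$ established in \cite{D8}.
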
  

\begin{proof}
	Suppose there exists an antisymmetric map $f:F(\R P^{2^m+1},2)\to \R P^{2^{m+1}-1},$ then from previous paragraph $P(f)^*(z)=b$ but $z^{2^m}=0$ while from \ref{alturadeb2} $b^{2^m}\neq 0.$
\end{proof}

E. Rees proved in \cite{rees}  that $I_{as}(\R P^6)\leq 9,$ due to the fact $I_{as}(\R P^k)\leq I_{as}(\R P^{k+1})$ for all $k,$ we have.

\begin{cor}
	$$I_{as}(\R P^5)=I_{as}(\R P^6)=9.$$
\end{cor}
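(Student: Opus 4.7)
The plan is a sandwich argument that squeezes both indices between $9$ and $9$, using the three inputs already on the table: the lower bound of Theorem \ref{T:asindexPibound}, the upper bound of Rees, and the monotonicity of $I_{as}$ under inclusion of projective spaces.

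First I would deduce the lower bound $I_{as}(\R P^5)\geq 9$ by plugging $m=2$ into Theorem \ref{T:asindexPibound}. Since $2^2+1=5$ and $2^{2+1}=8$, the theorem reads $8<I_{as}(\R P^5)$, hence $I_{as}(\R P^5)\geq 9$. Next I would record Rees's upper bound $I_{as}(\R P^6)\leq 9$ from \cite{rees}, which provides the matching top end.

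The remaining ingredient is the monotonicity statement $I_{as}(\R P^k)\leq I_{as}(\R P^{k+1})$, which I would justify in one line: the standard inclusion $\R P^k\hookrightarrow \R P^{k+1}$ induces a $\mathbb{Z}_2$-equivariant inclusion $F(\R P^k,2)\hookrightarrow F(\R P^{k+1},2)$ with respect to the symmetric action coordinate-swap, so any antisymmetric map $F(\R P^{k+1},2)\to S^{n-1}$ restricts to an antisymmetric map on $F(\R P^k,2)$. In particular $I_{as}(\R P^5)\leq I_{as}(\R P^6)$.

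Combining these three facts yields
\[
9\;\leq\;I_{as}(\R P^5)\;\leq\;I_{as}(\R P^6)\;\leq\;9,
\]
forcing equality throughout. There is no real obstacle here beyond making sure each inequality is invoked in the correct direction; the substantive work is entirely concentrated in Theorem \ref{T:asindexPibound} and in \cite{rees}, with monotonicity functioning as the glue that transfers Rees's upper bound from $\R P^6$ down to $\R P^5$ and the cohomological lower bound from $\R P^5$ up to $\R P^6$.
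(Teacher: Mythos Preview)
Your proposal is correct and follows exactly the same sandwich argument the paper uses: the lower bound $I_{as}(\R P^5)\geq 9$ from Theorem \ref{T:asindexPibound} with $m=2$, Rees's upper bound $I_{as}(\R P^6)\leq 9$, and the monotonicity $I_{as}(\R P^k)\leq I_{as}(\R P^{k+1})$ to bridge them. If anything, you supply a brief justification for monotonicity that the paper simply asserts.
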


The previous Corollary  was obtained   in the  PhD.  Author's Thesis, and was published  first in \cite{D8}. But in the following theorem, we  recover a more  general result in connection to the embedding  problem of projective spaces and real symmetric bilinear maps.   

\begin{thm}\label{C:iboundem}\mbox{}
	
	\begin{enumerate}
		\item $E(2^m+1)= I_{as}(\R P^{2^m+1})=2^{m+1}+1.$
		\item $N(2^m+2)=2^{m+1}+2.$
		
	\end{enumerate}
\end{thm}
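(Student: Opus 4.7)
The plan is to sandwich the quantities $I_{as}(\R P^{2^m+1})$, $E(2^m+1)$, and $N(2^m+2)$ between matching upper and lower bounds. The lower bound comes from the new Theorem \ref{T:asindexPibound}, while the upper bound comes from the classical polynomial-product construction of symmetric nonsingular bilinear maps recorded in Theorem \ref{T:embsym}(1). Everything then collapses to equalities via the universal chain (\ref{Eq:asIemb}).

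Specializing (\ref{Eq:asIemb}) at $k=2^m+1$ gives
$$I_{as}(\R P^{2^m+1})\;\leq\; E(2^m+1)\;\leq\; N(2^m+2)-1.$$
For $m\geq 1$, the integer $k=2^m+1$ is odd, so the "odd $k$" clause of Theorem \ref{T:embsym}(1) yields
$$N(2^m+2)-1\;\leq\;2(2^m+1)-1\;=\;2^{m+1}+1.$$
This supplies the top of the sandwich. The bottom is delivered directly by Theorem \ref{T:asindexPibound}, which reads $2^{m+1}<I_{as}(\R P^{2^m+1})$, i.e.\ $I_{as}(\R P^{2^m+1})\geq 2^{m+1}+1$. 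Combining the two extremes forces
$$I_{as}(\R P^{2^m+1})\;=\;E(2^m+1)\;=\;N(2^m+2)-1\;=\;2^{m+1}+1,$$
which is exactly assertion (1) together with assertion (2) (rewriting the last equality as $N(2^m+2)=2^{m+1}+2$).

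There is essentially no obstacle remaining, since the delicate cohomological work has been absorbed into Theorem \ref{T:asindexPibound} and Lemma \ref{alturadeb2}: the present argument is just an arithmetic pinching. The only care one needs is to verify the parity condition $k=2^m+1$ odd (so that the sharper branch of Theorem \ref{T:embsym}(1) applies) and to note the range $m\geq 1$, which is consistent with the already-known boundary values $E(3)=5$ and $N(4)=6$ recorded in Theorem \ref{T:embsym}(3)--(4).
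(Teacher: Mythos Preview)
Your proof is correct and follows essentially the same route as the paper's own argument: both assemble the chain $2^{m+1}+1\leq I_{as}(\R P^{2^m+1})\leq E(2^m+1)\leq N(2^m+2)-1\leq 2^{m+1}+1$ by combining Theorem~\ref{T:asindexPibound}, inequality~(\ref{Eq:asIemb}), and the odd-$k$ branch of Theorem~\ref{T:embsym}(1). Your added remark that one needs $m\geq 1$ for $k=2^m+1$ to be odd is a welcome clarification the paper leaves implicit.
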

 
 \begin{proof}
 	Theorem \ref{T:asindexPibound} gives $2^{m+1}+1\leq I_{as}(\R P^{2^m+1})$, join this inequality to (\ref{Eq:asIemb}) and Theorem \ref{T:embsym} to obtain $$2^{m+1}+1\leq I_{as}(\R P^{2^m+1})\leq E(2^m+1)\leq N(2^m+2)-1\leq 2^{m+1}+1.$$ 
 \end{proof}

  {\scriptsize CENTRO DE CIENCIAS MATEM\'ATICAS,UNIVERSIDAD NACIONAL AUT\'ONOMA DE M\'EXICO, CAMPUS MORELIA, APARTADO POSTAL 61-3 (XANGARI), MORELIA, MICHOAC\'AN, MEXICO 58089
  
  E-mail address:cda@matmor.unam.mx.
 
 Supported from DGAPA-UNAM postdoctoral scholarship.}

\end{document}